\let\amslrcorner\lrcorner
\let\lrcorner\amslrcorner
\theoremstyle{plain}
\newtheorem{theorem}{Theorem}[section]
\newtheorem{proposition}{Proposition}[section]
\newtheorem*{claim*}{Claim}
\newtheorem*{lemma*}{Lemma}
\newtheorem*{theorem*}{Theorem}
\theoremstyle{definition}
\newtheorem{definition}{Definition}[section]
\theoremstyle{remark}
\newtheorem{remark}{Remark}
\DeclareMathOperator{\Hessian}{Hess}
\DeclareMathOperator{\distance}{dist}
\DeclareMathOperator{\divergence}{div}
\DeclareMathOperator{\Domain}{Dom}
\DeclareMathOperator{\SL}{SL}
\DeclareMathOperator{\DL}{DL}
\DeclareMathOperator{\B}{\mathcal{B}}
\DeclareMathOperator{\G}{\mathcal{G}}
\DeclareMathOperator{\finitepart}{f.p.}
\renewcommand{\Re}{\operatorname{Re}}
\renewcommand{\i}{\operatorname{\sqrt{-1}}}
\newcommand{\extp}{\@ifnextchar^\@extp{\@extp^{\,}}}
\def\@extp^#1{\mathop{\bigwedge\nolimits^{\!#1}}}
\NewDocumentCommand{\inn}{mo}{%
	\langle #1\rangle
	\IfValueT{#2}{^{}_{\mspace{-3mu}#2}}%
}
\NewDocumentCommand{\dinn}{mo}{%
	\llangle #1\rrangle
	\IfValueT{#2}{^{}_{\mspace{-3mu}#2}}%
}
\NewDocumentCommand{\xinn}{>{\SplitArgument{1}{,}}mo}{%
	\doinnerproduct#1
	\IfValueT{#2}{^{}_{\mspace{-3mu}#2}}%
}
\NewDocumentCommand{\doinnerproduct}{mm}{%
	\langle #1\mid #2\rangle 
}
\newcommand*\bigcdot{\mathpalette\bigcdot@{1}}
\newcommand*\bigcdot@[2]{\mathbin{\vcenter{\hbox{\scalebox{#2}{$\m@th#1\bullet$}}}}}
\begin{document}
	
	\title{The $\bar{\partial}$-Neumann problem and boundary integral equations}
	
	\author{
	Bingyuan Liu
	}

	\date{\today}

	\maketitle	
	
	\begin{abstract}
		In this note, we find an equivalent boundary integral equation to the classical $\bar{\partial}$-Neumann problem. The new equation contains an equivalent regularity to the global regularity of the $\bar{\partial}$-Neumann problem. We also use the integral equations to observe a new-found rigidity property of the $\bar{\partial}$-Neumann problem on the unit ball in $\mathbb{C}^2$. 
	\end{abstract}
	
	\section{Introduction}\label{introduction}


One of the important questions of Dolbeault cohomology is to understand for which domain, the Dolbeault cohomology is trivial. This question has been solved by H\"{o}rmander in \cite{Ho65} using $L^2$ methods, which is a technique from partial differential equations. More specifically, H\"{o}rmander showed, on pseudoconvex domains, that $\bar{\partial}u=f$ ($ f\in L^2$ and $\bar{\partial}f=0$) is always solvable for $u\in\Domain(\bar{\partial})$ and $u$ satisfies the estimate $\|u\|_{L^2(\Omega)}\leq C\|f\|_{L^2(\Omega)}$. 

The $\bar{\partial}$ and its adjoint induce a natural boundary condition. This enables one to study the Poisson equation under this boundary condition. This kind of boundary value problem is called the $\bar{\partial}$-Neumann problem. We will use $-\Box$ to denote the Laplace operator with this natural boundary condition. The $\bar{\partial}$-Neumann problem is closely related to solving the classical Poisson equation, but with a non-coercive boundary condition. This non-coercive boundary condition brings some essential differences comparing with the classical Dirichlet\slash Neumann condition. One of the important difference is the global regularity, i.e., given the data smooth up to boundary, the solution is smooth up to boundary as well.

In 1965, Kohn proved the global regularity of the $\bar{\partial}$-Neumann problem when the domain is strongly pseudoconvex by observing the boundary term in the Kohn--Morrey--H\"{o}rmander formula (See Proposition \ref{MMH}). An interesting question after Kohn's result is whether the global regularity can generalized to all bounded pseudoconvex domains. In 1992, Boas--Straube gave a sufficient condition under which the global regularity holds. Mathematicians then expected an affirmative answer but Christ \cite{Ch96} gave the counterexample ``the $\beta$-worm domains'' (See Barrett's \cite{Ba92} as well) in 1996. This counterexample asserts that the global regularities do not hold for all bounded pseudoconvex domains.

There are some recent results on sufficient conditions of global regularity, see e.g., Straube \cite{St08}, Straube--Zeytuncu \cite{SZ15}, Harrington \cite{Ha08}, Harrington--Liu \cite{HL19}, Harrington--Raich \cite{HR18}, Harrington--Raich \cite{HR19}, Liu--Raich \cite{LR20} and Liu \cite{Li19}. However, the complete answer, to the cut of when the regularity holds or not, is still unknown. One difficulty is the lack of sharp tools for studying the global regularity. The aim of this note is to seek for an alternative way to express the $\bar{\partial}$-Neumann problem. Indeed, we will use the classical tools of harmonic analysis to study the solutions of the $\bar{\partial}$-Neumann problem in terms of system of integral equations involving the boundary values of the solutions and the boundary values of the conomal derivative of these solutions. This reduction to boundary brings a new viewpoint to study the global regularity. The main tool for the reduction comes from potential theory and good references about it can be found in McLean \cite{Mc00} and Hsiao--Wendland \cite{HW08}. After the theorem of the reduction (Theorem \ref{maintheorem}), we observe, in Theorem \ref{newfound}, a new-found rigidity property of the $\bar{\partial}$-Neumann problem with constant velocity in $\mathbb{B}(0,1)$. This property gives a certain relation among the constants on boundary conditions and the data of the $\bar{\partial}$-Neumann problem.

 We remark that Greiner--Stein in \cite{GS77} and Chang--Nagel--Stein in \cite{CNS92} also obtained a reduction of the $\bar{\partial}$-Neumann problem to the boundary. However, their papers used a different method like Green's functions and symbolic calculus. Consequently, their reduction is up to a smoothing operator and their formulas depend on coordinates. Our formula in this note, on the other hand, is precisely formulated (not up to a smoothing operator).

The rest of the note is organized as following. In Section \ref{pre}, we introduce background and some known classical results. Section \ref{red} will give the theorem of reduction. It also provides an equivalent condition to the global regularity of the $\bar{\partial}$-Neumann problem. The main theorem of this section is Theorem \ref{maintheorem}. At the last, in Section \ref{obs}, we exhibit the new-found rigidity property. The main theorem in this section is Theorem \ref{newfound}.

\section{Preliminary}\label{pre}
In this note, we use $(\cdot,\cdot)$ and $\inn{\cdot,\cdot}$ to denote the pointwise inner product and the $L^2$ inner product respectively. We use $\braket{\cdot|\cdot}$ to denote the action of Schwartz distributions on test functions.

Let $\Omega$ be a bounded domain with smooth boundary. Let $\delta$ be the signed distance function defined as follows:

\[\delta(z):=\begin{cases}
	-\distance(z, \partial\Omega) & z\in\Omega\\
	\distance(z, \partial\Omega) & \text{otherwise}.
\end{cases}\] 

Since $\delta$ is a signed distance function, we have the convention that $(\nabla\delta, \nabla\delta)=1$, i.e., \[\sum_i |\frac{\partial\delta}{\partial x_i}|^2+\sum_i |\frac{\partial\delta}{\partial y_i}|^2=1.\]

Traditionally, the Riemann metric gives $(\frac{\partial}{\partial x_i}, \frac{\partial}{\partial x_i})=1$. For several complex variables, this deduce that \[
	(\frac{\partial}{\partial z_i}, \frac{\partial}{\partial z_i})=(\frac{1}{2}\frac{\partial}{\partial x_i}-\frac{\i}{2}\frac{\partial}{\partial y_i}, \frac{1}{2}\frac{\partial}{\partial x_i}-\frac{\i}{2}\frac{\partial}{\partial y_i})=(\frac{1}{2}\frac{\partial}{\partial x_i}, \frac{1}{2}\frac{\partial}{\partial x_i})+(\frac{\i}{2}\frac{\partial}{\partial y_i}, \frac{\i}{2}\frac{\partial}{\partial y_i})=\frac{1}{4}+\frac{1}{4}=\frac{1}{2}
\]

For a bounded domain $\Omega$ with smooth boundary in $\mathbb{C}^2$, we let \[L=2\frac{\partial\delta}{\partial  z_2}\frac{\partial}{\partial z_1}-2\frac{\partial\delta}{\partial z_1}\frac{\partial}{\partial z_2},\] be the $(1,0)$ vector field. One can observe that $L$ is tangential to $\partial\Omega$ and \[\begin{split}
	&(L, L)=4(\frac{\partial\delta}{\partial  z_2}\frac{\partial}{\partial z_1}-\frac{\partial\delta}{\partial z_1}\frac{\partial}{\partial z_2},\frac{\partial\delta}{\partial  z_2}\frac{\partial}{\partial z_1}-\frac{\partial\delta}{\partial z_1}\frac{\partial}{\partial z_2})=2(|\frac{\partial\delta}{\partial z_2}|^2+|\frac{\partial\delta}{\partial z_1}|^2)\\=&2(|\frac{1}{2}(\frac{\partial\delta}{\partial x_2}-\i\frac{\partial\delta}{\partial y_2})|^2+|\frac{1}{2}(\frac{\partial\delta}{\partial x_1}-\i\frac{\partial\delta}{\partial y_1})|^2)=\frac{1}{2}|\nabla\delta|^2=\frac{1}{2}.
\end{split}\] Similarly, we let \[N=2\frac{\partial\delta}{\partial\bar{z}_1}\frac{\partial}{\partial z_1}+2\frac{\partial\delta}{\partial\bar{z}_2}\frac{\partial}{\partial z_2}.\] Clearly, $N$ is normal to $\partial\Omega$ and $(N,N)=\frac{1}{2}$.

We also write $\omega_{\overline{L}}=2\frac{\partial\delta}{\partial z_2}d\bar{z}_1-2\frac{\partial\delta}{\partial z_1}d\bar{z}_2$ and $\omega_{\overline{N}}=2\frac{\partial\delta}{\partial \bar{z}_1}d\bar{z}_1+2\frac{\partial\delta}{\partial \bar{z}_2}d\bar{z}_2$. One can easily conclude that $(\omega_{\overline{L}},\omega_{\overline{L}})=(\omega_{\overline{N}},\omega_{\overline{N}})=2$ and $\omega_{\overline{L}}(\overline{L})=\omega_{\overline{N}}(\overline{N})=1$. Moreover, from the convention that \[
	1=((\sqrt{2})^{-2}\omega_{\overline{L}}\wedge\omega_{\overline{N}},(\sqrt{2})^{-2}\omega_{\overline{L}}\wedge\omega_{\overline{N}}),
\] we have that \[(\omega_{\overline{L}}\wedge\omega_{\overline{N}},\omega_{\overline{L}}\wedge\omega_{\overline{N}})=4.\]

We will use $\omega_{\overline{L}}$ and $\omega_{\overline{N}}$ as the frame of $(0,1)$-forms.

Let $\bar{\partial}: \mathscr{D}_{(p,q)}'(\mathbb{C}^n)\rightarrow\mathscr{D}_{{p, q+1}}'(\mathbb{C}^n)$ be the Cauchy--Riemann operator, where $ \mathscr{D}_{(p,q)}'(\mathbb{C}^n)$ denotes the $(p, q)$-currents.

In $\mathbb{C}^2$, due to the coordinate-free definition of exterior derivative $d$: \[\begin{split}
	&\bar{\partial}u(\overline{L},\overline{N})\\=&du(\overline{L}, \overline{N})\\=&\overline{L}u(\overline{N})-\overline{N}u(\overline{L})-u([\overline{L}, \overline{N}])\\
	=&\overline{L}u_2-\overline{N}u_1-2u_1\cdot ([\overline{L},\overline{N}], \overline{L})-2u_2\cdot ([\overline{L},\overline{N}], \overline{N})
\end{split}\] 

and the fact that $([L,N], N)=([L,N], \nabla\delta)=LN\delta-NL\delta=0$,  we have that \[\bar{\partial}(u_1\omega_{\overline{L}}+u_2\omega_{\overline{N}})=\left(\overline{L}u_2-\overline{N}u_1-2u_1\cdot (L, [L,N])\right)(\omega_{\overline{L}}\wedge\omega_{\overline{N}}),\] for $u=u_1\omega_{\overline{L}}+u_2\omega_{\overline{N}}\in\mathscr{D}'_{0,1}(\mathbb{C}^2)$

Fix a bounded pseudoconvex domain $\Omega$ with smooth boundary. It is clear that $\bar{\partial}$ is unbounded on $L^2(\Omega)$. So we restrict $\bar{\partial}$ on $\Domain(\bar{\partial})\subset L^2(\Omega)$, where $\Domain(\bar{\partial})=\set{u\in L^2(\Omega): \bar{\partial}u\in L^2(\Omega)}$. The graph norm associated to $\bar{\partial}$ is defined as $\|u\|^2_{\bar{\partial}}=\|u\|_{L^2(\Omega)}^2+\|\bar{\partial}u\|_{L^2(\Omega)}^2$ so $\Domain(\bar{\partial})$ becomes a Hilbert space with the graph norm. Note that in general $\Domain(\bar{\partial})\subsetneq H^1(\Omega)$ because $\bar{\partial}$ operator only takes selected derivatives. (Here $H^1(\Omega)$ denotes the first Sobolev space on $\Omega$ and is equivalent to $W^1(\Omega)$.) Consequently, there is no obvious way to define the trace operator $\gamma$ on $\Domain(\bar{\partial})$. Because of this, we will always assume, in this note, a higher regularity of $u$, in order to define $\gamma u$. Since our goal is the global regularity, we do not lose generality with this extra assumption.

The adjoint $\bar{\partial}^*$ of $\bar{\partial}$ (for the definition of adjoint, see \cite{RS72}) is unbounded, closed and densely defined on $L^2(\Omega)$. We are going to calculate the formal adjoint of $\bar{\partial}$ using a formula of Griffiths--Harris \cite{GH94}: $\bar{\partial}^*=-*\bar{\partial}*$ on compactly supported smooth forms, where $*$ stands for the Hodge-$*$ operator. (The definition of the Hodge-$*$ operator may be different in other references. But, the $\bar{\partial}^*$ would remain the same.) 

We observe that

\[*u=*(u_1\omega_{\overline{L}})+*(u_2\omega_{\overline{N}}).\] Here \[*(u_1\omega_{\overline{L}})=\frac{1}{2}\bar u_1\omega_L\wedge\omega_N\wedge\omega_{\overline{N}}\enskip\text{and}\enskip *(u_2\omega_{\overline{N}})=-\frac{1}{2}\bar{u}_2\omega_L\wedge\omega_N\wedge\omega_{\overline{L}}.\]
So, using the invariant formula of exterior differentials, we have that \[\begin{split}
	&d*u(L, N, \overline{L}, \overline{N})=\bar{\partial}*u(L, N, \overline{L}, \overline{N})\\=&\frac{1}{2}\left(\overline{L}\bar{u}_1+2\bar{u}_1\left(([L,\overline{L}], L)+([N,\overline{L}], N)-([\overline{L},\overline{N}],\overline{N})\right)\right)\\&+\frac{1}{2}\left(\overline{N}\bar{u}_2+2\bar{u}_2\left(([L,\overline{N}], L)+([N,\overline{N}], N)+([\overline{L}, \overline{N}], \overline{L})\right)\right)\\=&\frac{1}{2}\left(\overline{L}\bar{u}_1+2\bar{u}_1\left((L, \nabla_LL)+(N,\nabla_NL)\right)\right)+\frac{1}{2}\left(\overline{N}\bar{u}_2+2\bar{u}_2\left((N, \nabla_NN)+(L, \nabla_L N)\right)\right)\\
	=&\frac{1}{2}\overline{L}\bar{u}_1+\frac{1}{2}\left(\overline{N}\bar{u}_2+2\bar{u}_2(L, [L, N])\right),
\end{split}\]
where in the last line we have used the identity $(\nabla_LL,L)=-(\nabla_LN,N)$ and $(\nabla_NN,N)=-(\nabla_NL,L)$.
Consequently, 
\[
	\bar{\partial}^* u=-*\bar{\partial}*u=-2(Lu_1+Nu_2+2u_2([L, N], L)),
\] for $u\in C^\infty_{0,1}(\overline{\Omega})\cap\Domain(\bar{\partial}^*)$.

One can further define the operator $\bar{\partial}+\bar{\partial}^*$ on $\Domain(\bar{\partial}+\bar{\partial}^*):=\Domain(\bar{\partial})\cap\Domain(\bar{\partial}^*)$. In matrix form, we may write the equation in the frame of $\omega_{\overline{L}}$ and $\omega_{\overline{N}}$:
\[(\bar{\partial}+\bar{\partial}^*)\begin{pmatrix}
	u_1\\u_2
\end{pmatrix}=\begin{pmatrix}
-2\left(Lu_1+Nu_2+2u_2\cdot ([L,N], L)\right)\\\overline{L}u_2-\overline{N}u_1-2u_1\cdot (L, [L,N])
\end{pmatrix},\] for $u=u_1\omega_{\overline{L}}+u_2\omega_{\overline{N}}\in\Domain(\bar{\partial}+\bar{\partial}^*)$, where the first component in the right matrix is the coefficient of $1$ and the second component is the coefficient of $\omega_{\overline{L}}\wedge\omega_{\overline{N}}$. 

For $u\in \mathscr{D}'_{0,0} (\mathbb{C}^2)$, we observe that $\bar{\partial}u(\overline{L})=\overline{L}u$ and $\bar{\partial}u(\overline{N})=\overline{N}u$. We obtain that $\bar{\partial}u=(\overline{L}u)\omega_{\overline{L}}+(\overline{N}u)\omega_{\overline{N}}$. By integration by parts, we find \[\bar{\partial}^*u=2(Ns)\omega_{\overline{L}}-2(Ls)\omega_{\overline{N}},\] for $u=s\omega_{\overline{L}}\wedge\omega_{\overline{N}}\in\Domain(\bar{\partial}^*)$.

In addition to the discussion for operators of $\bar{\partial}$ and $\bar{\partial}^*$, the following celebrated \textit{a priori} estimate plays a role in our note.

\begin{proposition}[Kohn--Morrey--H\"{o}rmander estimate]\label{MMH}
Let $\Omega\subset\mathbb{C}^n$ be a bounded pseudoconvex domain with smooth boundary. Let $\delta$ be the signed distance function of $\Omega$. For arbitrary $u\in C^\infty_{(0,q)}(\overline{\Omega})\cap\Domain(\bar{\partial}^*)$, we have that 
\[C\|\bar{\partial} u\|^2+C\|\bar{\partial}^* u\|^2
\geq\int_\Omega(u,u)\,dx+\int_{\partial\Omega}\Hessian_\delta(u,u)\,d\sigma.\]
\end{proposition}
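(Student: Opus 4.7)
The plan is to derive the classical Bochner/Kohn--Morrey--H\"ormander identity by a local frame computation, and then upgrade the zero-order control via H\"ormander's weighted $L^2$ trick. First, I fix a point in $\overline{\Omega}$ and choose a local orthonormal frame $L_1,\ldots,L_n$ of $(1,0)$-vector fields with dual coframe $\omega_1,\ldots,\omega_n$; near $\partial\Omega$, arrange that $L_n$ is the complex normal (the higher-dimensional analogue of $N$ above). Writing $u=\sum_{|J|=q} u_J\,\bar\omega^J$, the same derivation used for $\bar\partial$ and $\bar\partial^*$ in $\mathbb{C}^2$ earlier yields, up to zero-order terms in $u$ that are absorbable, the schematic expressions
\[\bar\partial u\equiv\sum_{j,J}(\bar L_j u_J)\,\bar\omega^j\wedge\bar\omega^J,\qquad \bar\partial^* u\equiv-\sum_{K}\sum_{j}(L_j u_{jK})\,\bar\omega^K.\]

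Next, I expand $\|\bar\partial u\|^2+\|\bar\partial^* u\|^2$. The diagonal contributions yield $\sum_{j,J}\|\bar L_j u_J\|^2$ while the off-diagonal ones have the form $\inn{L_j u_{jK},L_k u_{kK}}$ and $\inn{\bar L_j u_{jK},\bar L_k u_{kK}}$ with alternating signs. A single integration by parts on an off-diagonal term interchanges $L$ with $\bar L$ and produces (i) an interior commutator $\int_\Omega[L_j,\bar L_k]u_{jK}\overline{u_{kK}}$ and (ii) a boundary integral weighted by $\bar L_k\delta$. At this point I invoke the standard characterization of $\Domain(\bar\partial^*)\cap C^\infty_{(0,q)}(\overline{\Omega})$: the components $u_J$ with $n\in J$ vanish on $\partial\Omega$. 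Inserting this tangentiality kills every boundary contribution containing a normal index, so only tangential $J,K\subset\{1,\ldots,n-1\}$ survive. The remaining boundary integrand, after a short algebraic rearrangement and the identity $[L_j,\bar L_k]\delta=\partial_j\partial_{\bar k}\delta$ on $\partial\Omega$, collapses to $\sum_{j,k,K}(\partial_j\partial_{\bar k}\delta)\,u_{jK}\overline{u_{kK}}$, which is precisely $\Hessian_\delta(u,u)$ restricted to tangential $(0,q)$-forms and is pointwise nonnegative by pseudoconvexity.

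To obtain the interior term $\int_\Omega(u,u)\,dx$ on the right-hand side, I introduce the weight $\phi=|z|^2$ on the bounded domain $\Omega$ (so $e^{-\phi}$ is comparable to $1$) and redo the computation with the weighted inner product $\inn{\cdot,\cdot}_\phi=\inn{e^{-\phi}\cdot,\cdot}$. The weighted formal adjoint $\bar\partial^*_\phi$ differs from $\bar\partial^*$ by lower-order terms, while the interior commutator now picks up an additional summand $\partial_j\partial_{\bar k}\phi$. Since the complex Hessian of $\phi$ is uniformly bounded below by a positive multiple of the identity, summation yields a term $\geq c\|u\|^2_{L^2_\phi}\geq c'\|u\|^2_{L^2}$; absorbing the zero-order error contributions from the preceding steps then gives the asserted inequality with a single constant $C$.

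The main obstacle is the careful bookkeeping in the second paragraph: one must track precisely which integration by parts produces which boundary piece, confirm that the tangential boundary condition on $\Domain(\bar\partial^*)$ kills the unwanted contributions, and match the surviving boundary integrand with $\Hessian_\delta(u,u)$. The subsequent weighted step and the absorption of lower-order terms are then standard.
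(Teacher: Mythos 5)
The paper states Proposition~\ref{MMH} without proof, citing it as a classical result, so there is no in-paper argument to compare against; your proposal is therefore judged on its own merits. The first two paragraphs follow the canonical Morrey--Kohn--H\"ormander identity: expand $\|\bar\partial u\|^2+\|\bar\partial^*u\|^2$ in a special frame, integrate by parts (twice, not once---you need to move $L_j$ past $\bar L_k$, which produces the commutator $[L_j,\bar L_k]$ \emph{and} two boundary terms, one weighted by $L_j\delta$ and one by $\bar L_k\delta$; calling it ``a single integration by parts'' elides the step that actually generates the Levi form), invoke the tangentiality condition from $\Domain(\bar\partial^*)$ to kill the normal-index boundary contributions, and identify the survivor with $\Hessian_\delta(u,u)$. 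That part is correct in substance.

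The third paragraph, however, has a genuine gap. You fix the weight $\phi=|z|^2$, so that $\partial_j\partial_{\bar k}\phi=\delta_{jk}$ contributes an interior term $\geq q\|u\|^2_{L^2_\phi}$ to the weighted identity
\[\|\bar\partial u\|^2_\phi+\|\bar\partial^*_\phi u\|^2_\phi=\sum_{j,J}\|\bar L_ju_J\|^2_\phi+\int_\Omega\sum_{j,k,K}\phi_{j\bar k}u_{jK}\overline{u_{kK}}\,e^{-\phi}+\int_{\partial\Omega}\Hessian_\delta(u,u)\,e^{-\phi}\,d\sigma.\]
To pass to the unweighted $\bar\partial^*$ you must handle $\bar\partial^*_\phi u=\bar\partial^*u+X_\phi u$, where $X_\phi$ is contraction with $\partial\phi$ and so has operator norm comparable to $\sup_\Omega|\nabla\phi|\approx\operatorname{diam}(\Omega)$. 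This gives
\[\|\bar\partial^*_\phi u\|^2_\phi\leq 2\|\bar\partial^*u\|^2_\phi+C\bigl(\operatorname{diam}\Omega\bigr)^2\|u\|^2_\phi,\]
an error of order $(\operatorname{diam}\Omega)^2\|u\|^2$, whereas the interior gain from $\phi_{j\bar k}$ is only of order $q\|u\|^2$. Your claim that one can ``absorb the zero-order error contributions'' is therefore false once $\Omega$ has diameter larger than roughly $\sqrt{q}$: the error you must absorb is strictly larger than the term absorbing it. The standard fix---and what H\"ormander actually does---is to use $\phi=t|z|^2$ with $t>0$ \emph{small}: the Hessian gain scales linearly in $t$ while the $X_\phi$ error scales like $t^2$, so choosing $t<cq/(\operatorname{diam}\Omega)^2$ makes the absorption legitimate and, since $e^{-\phi}$ is then bounded above and below by constants depending only on $\Omega$, produces the asserted inequality with a single constant $C$ (depending on $q$ and the diameter). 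With that one correction your argument goes through.
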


Combining with a density lemma, this \textit{a priori} estimate becomes a solvability result for bounded pseudoconvex domains: \[\|u\|_{L^2(\Omega)}\leq C\|(\bar{\partial}+\bar{\partial}^*)u\|_{L^2(\Omega)},\] for all $u\in\Domain(\bar{\partial}+\bar{\partial}^*)$. Moreover, with this estimate, one can obtain, by the Cauchy--Schwartz inequality, that for a bounded pseudoconvex domain $\Omega$ with smooth boundary : \[\|u\|_{L^2(\Omega)}\leq C\|(\bar{\partial}+\bar{\partial}^*)^2u\|_{L^2(\Omega)},\] for all $u\in\Domain(\bar{\partial}+\bar{\partial}^*)$ so that $(\bar{\partial}+\bar{\partial}^*)u\in \Domain(\bar{\partial}+\bar{\partial}^*)$. Note this inequality does not guarantee a regularity because we could not improve $\|\cdot\|_{L^2(\Omega)}$ to $\|\cdot\|_{H^1(\Omega)}$ due to the boundary condition (roughly speaking, the boundary condition from $\Domain(\bar{\partial}^*)$ is not strong enough to make various integration by parts).

Motivated from the inequality $\|u\|_{L^2(\Omega)}\leq C\|(\bar{\partial}+\bar{\partial}^*)^2u\|_{L^2(\Omega)}$, one may define the operator $\Box=2(\bar{\partial}+\bar{\partial}^*)^2$ (Our definition of $\Box$ differs from the ones in Straube \cite{St10} and Chen--Shaw \cite{CS01} by a multiple of constant; in our setting, $-\Box=\Delta$ and in this way, we can easily borrow results of $\Delta$ from harmonic analysis) and with our definition, we may easily make use of the techniques from harmonic analysis (boundary integral equations). Let \[\Domain(\Box):=\set{u\in L^2(\Omega): u\in \Domain(\bar{\partial}+\bar{\partial}^*),\;\bar{\partial}u\in \Domain(\bar{\partial}^*)\;\text{and}\;\bar{\partial}^*u\in \Domain(\bar{\partial})}.\]Specifically, the following theorem is fundamental to the $\bar{\partial}$-Neumann problem. 

\begin{theorem}
	Let $\Omega$ be a bounded pseudoconvex domain with smooth boundary in $\mathbb{C}^2$. For each $f\in L^2(\Omega)$, one can find a unique $u\in\Domain(\Box)$ so that $\Box u=f$. Moreover, we have the estimate \[\|u\|_{L^2(\Omega)}\leq C\|f\|_{L^2(\Omega)}.\]
\end{theorem}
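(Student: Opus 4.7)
The plan is to derive the theorem from the a priori estimate $\|u\|_{L^2(\Omega)}\leq C\|\Box u\|_{L^2(\Omega)}$ on $\Domain(\Box)$, which the text has already obtained via Cauchy--Schwarz from the Kohn--Morrey--H\"ormander estimate, combined with self-adjointness of $\Box$ and the closed-range theorem.

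The first step is to realize $\tfrac{1}{2}\Box=\bar{\partial}\bar{\partial}^*+\bar{\partial}^*\bar{\partial}$ as a self-adjoint non-negative operator through the Friedrichs extension of the Hermitian form
\[Q(u,v):=\langle\bar{\partial}u,\bar{\partial}v\rangle+\langle\bar{\partial}^*u,\bar{\partial}^*v\rangle\]
on $V:=\Domain(\bar{\partial})\cap\Domain(\bar{\partial}^*)$. The MMH estimate of Proposition \ref{MMH}, extended from $C^\infty_{(0,q)}(\overline{\Omega})\cap\Domain(\bar{\partial}^*)$ to all of $V$ by the density lemma alluded to in the text, provides coercivity $\|u\|_{L^2}^2\leq C\,Q(u,u)$ for $u\in V$. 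The Friedrichs extension of $Q$ then produces a self-adjoint non-negative operator whose domain matches exactly the set called $\Domain(\Box)$ in the text.

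The second step is to combine self-adjointness with the estimate $\|u\|_{L^2}\leq C\|\Box u\|_{L^2}$. Injectivity and uniqueness of the solution are immediate from the estimate. For closed range, if $\Box u_n\to f$ in $L^2$, the estimate makes $\{u_n\}$ Cauchy; its limit $u$ lies in $\Domain(\Box)$ with $\Box u=f$ because $\Box$, being self-adjoint, is closed. Self-adjointness then gives $\overline{\Range(\Box)}=\ker(\Box^*)^\perp=\ker(\Box)^\perp=L^2(\Omega)$, so the range is dense; combined with closedness, $\Range(\Box)=L^2(\Omega)$. This establishes existence, and the required estimate $\|u\|_{L^2}\leq C\|f\|_{L^2}$ is exactly the a priori bound read after setting $f=\Box u$.

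The main obstacle is the first step: the algebraic sum $\bar{\partial}\bar{\partial}^*+\bar{\partial}^*\bar{\partial}$ is only symmetric on the intersection of the two natural domains, and identifying its correct self-adjoint realization with the stated $\Domain(\Box)$ requires the Friedrichs construction together with a careful unpacking of the variational identity $Q(u,v)=\tfrac{1}{2}\langle f,v\rangle$ into the pair of conditions $\bar{\partial}u\in\Domain(\bar{\partial}^*)$ and $\bar{\partial}^*u\in\Domain(\bar{\partial})$. Once this self-adjointness is in place, the remainder of the argument is standard functional analysis.
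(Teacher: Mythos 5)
The paper states this theorem as background without giving a proof; it is the classical Hörmander--Kohn existence theorem for the $\bar{\partial}$-Neumann operator. Your Friedrichs-extension argument is precisely the standard proof found in Folland--Kohn, Chen--Shaw, and Straube's monograph, and it is correct: coercivity of $Q$ from the Kohn--Morrey--Hörmander estimate (with the nonnegative boundary Hessian supplied by pseudoconvexity), closedness of $Q$ from closedness of $\bar{\partial}$ and $\bar{\partial}^*$, and then self-adjointness plus the a priori bound $\|u\|_{L^2}\leq C\|\Box u\|_{L^2}$ yields bijectivity via the closed-range argument. The one step you flag but do not carry out --- identifying the domain of the form-associated operator with $\Domain(\Box)$ as the paper defines it, i.e.\ extracting $\bar{\partial}u\in\Domain(\bar{\partial}^*)$ and $\bar{\partial}^*u\in\Domain(\bar{\partial})$ from the variational identity --- does require the closed-range property of $\bar{\partial}$ on bounded pseudoconvex domains (again Hörmander), so it is not purely formal, but you correctly single it out as the delicate point and the rest of the argument is sound.
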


We now turn to introduce the background of boundary integral equations. For $u, v\in H^2(\Omega)$, we define the sesquilinear form: $\Phi(u, v)=\langle(\bar{\partial}+\bar{\partial}^*)u, (\bar{\partial}+\bar{\partial}^*)v\rangle$. The integration by parts of coordinate-free can be stated as follows: \[\int_{\Omega}Xf+(\divergence X)f\,dV=\int_{\Omega}Xf+2f\left((\nabla_LX, L)+(\nabla_NX, N)\right) \,dV=\int_{\partial\Omega}(X\delta) f\,d\sigma,\] for all $f\in C^\infty(\overline{\Omega})$ and all tangent vector fields $X$ of $\overline{\Omega}$. Using this integration by parts, one may obtain that \begin{equation}\label{sesqulinear}
	\Phi(u,v)=\frac{1}{2}\braket{\Box u, v}+\frac{1}{2}\braket{\B u, \gamma v}_{\partial\Omega}.
\end{equation} In the equation, $\gamma: H^1(\Omega)\rightarrow H^{\frac{1}{2}}(\partial\Omega)$ denotes the trace operator. The $\B: H^2(\Omega)\rightarrow H^{\frac{1}{2}}(\partial\Omega)$ is the conormal derivative defined as follows:
\[\B u=2\left.\begin{pmatrix}
	\overline{N}&-\overline{L}\\
	L&N
\end{pmatrix}\begin{pmatrix}
	u_1\\u_2
\end{pmatrix}\right|_{\partial\Omega}+2\left.\begin{pmatrix}
	2(L,[L,N])&0\\
	0&2([L,N],L)
\end{pmatrix}\begin{pmatrix}
	u_1\\u_2
\end{pmatrix}\right|_{\partial\Omega}.\]  Written in matrix forms, (\ref{sesqulinear}) can be represented as 
\[\begin{split}
		&4\Bigg\langle\begin{pmatrix}
		Lu_1+Nu_2+2u_2\cdot ([L,N], L)\\\overline{L}u_2-\overline{N}u_1-2u_1\cdot (L, [L,N])
	\end{pmatrix},\begin{pmatrix}
		Lu_1+Nu_2+2u_2\cdot ([L,N], L)\\\overline{L}u_2-\overline{N}u_1-2u_1\cdot (L, [L,N])
	\end{pmatrix}\Bigg\rangle\\=&
	\Bigg\langle\begin{pmatrix}
		\Box u_1\\\Box u_2
	\end{pmatrix}, \begin{pmatrix}
		v_1\\v_2
	\end{pmatrix}\Bigg\rangle+2\Bigg\langle\begin{pmatrix}
		-\overline{L}u_2+\overline{N}u_1+2u_1(L, [L,N])	\\Lu_1+Nu_2+2u_2 ([L, N], L)
	\end{pmatrix},\begin{pmatrix}
		\gamma u_1\\\gamma u_2
	\end{pmatrix}\Bigg\rangle_{\partial\Omega},
\end{split}
\] where in the equation above, $\langle\cdot, \cdot\rangle$ denotes $L^2$ component-wise inner product.

We now recall some basics from the boundary integral equations. For this topic, references are Hsiao--Wendland \cite{HW08} and McLean \cite{Mc00}. On a bounded domain with smooth boundary, partial differential equations with boundary condition may be reduced to integral equations on boundary called boundary integral equation. The foundation of this reduction is the Green's third identity. A modern reformulation of this identity is explained in the following paragraphs.

We define $\SL=\G\gamma^*$ and $\DL=\G\B^*$, where $\G$ is the fundamental solution of $\Delta u=f$ satisfies \[\Delta\G u=\G\Delta u=u\] for all $u\in\mathscr{E}'(\mathbb{C}^n)$, where $\mathscr{E}'(\mathbb{C}^n)$ denotes the set of distributions with compact support. 
\begin{theorem}[The third Green's identity]
	Let $f\in L^2(\Omega)$ and suppose $u\in H^2(\Omega)$. Define \[\tilde{f}(z)=\begin{cases}
		f & z\in\Omega\\
		0 & \text{otherwise}.
	\end{cases}\]
Suppose $\Delta u=f$ in $\Omega$. Then $u$ can be written as follows,\begin{equation}\label{repofu}
	u=\G \tilde{f}+\G\B^*(\gamma u)_{\partial\Omega}-\G\gamma^*(\B u)_{\partial\Omega},
\end{equation} in $\Omega$, where $\B^*$ and $\gamma^*$ are adjoints of the conormal derivative and the trace operator respectively.
\end{theorem}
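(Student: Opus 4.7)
My plan is to derive (\ref{repofu}) from a Green's-second-identity, itself obtained by symmetrizing the sesquilinear identity (\ref{sesqulinear}), and then substituting the fundamental solution $\G$ for one of the arguments so that the resulting boundary terms are precisely the layer potentials $\G\B^*$ and $\G\gamma^*$. First I would observe that $\Phi(u,v)=\inn{(\bar\partial+\bar\partial^*)u,(\bar\partial+\bar\partial^*)v}$ is Hermitian, i.e., $\Phi(u,v)=\overline{\Phi(v,u)}$, so applying (\ref{sesqulinear}) in both orderings and subtracting yields
\begin{equation*}
\inn{\Box u,v}-\inn{u,\Box v}=\inn{\gamma u,\B v}[\partial\Omega]-\inn{\B u,\gamma v}[\partial\Omega]
\end{equation*}
for all $u,v\in H^2(\Omega)$. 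This is the Green's-second identity adapted to the conormal operator $\B$ coming from $\bar\partial+\bar\partial^*$.

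Next, for fixed $z\in\Omega$, I would substitute $v(\zeta)=\G(\zeta-z)$. Because $v$ is singular at $\zeta=z$, the substitution is made on $\Omega\setminus\overline{B(z,\epsilon)}$ and one passes to the limit $\epsilon\to 0$. Using $\Box=-\Delta$ and $\Delta_\zeta\G(\zeta-z)=\delta_z$, the term $\inn{u,\Box v}$ contributes $-u(z)$ while $\inn{\Box u,v}$ contributes $-\inn{f,\G(\cdot-z)}=-(\G\tilde f)(z)$; on $\mathbb{C}^2\cong\mathbb{R}^4$ the kernel $\G$ is a constant multiple of $|\zeta-z|^{-2}$, so the integrals of the $\G$-type terms over the inner sphere $\partial B(z,\epsilon)$ are $O(\epsilon^{-2}\cdot\epsilon^3)=O(\epsilon)$ and drop out, whereas the first-order piece of $\B\G$ on this sphere produces exactly the Dirac contribution giving back $u(z)$. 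Equivalently, one may extend $u$ by zero to $\tilde u\in\mathscr{E}'(\mathbb{C}^2)$, compute $\Delta\tilde u$ distributionally to obtain $\tilde f+\B^*(\gamma u)-\gamma^*(\B u)$ on $\mathbb{C}^2$, and convolve with $\G$. Either route produces
\begin{equation*}
u(z)=(\G\tilde f)(z)+\inn{\gamma u,\B\G(\cdot-z)}[\partial\Omega]-\inn{\B u,\gamma\G(\cdot-z)}[\partial\Omega],
\end{equation*}
and the two boundary pairings are, by the defining adjoint relations, $(\G\B^*(\gamma u))(z)$ and $(\G\gamma^*(\B u))(z)$, which is exactly (\ref{repofu}).

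The main obstacle is the careful justification of the $\epsilon\to 0$ limit, in particular verifying that the non-standard zeroth-order terms in $\B$ involving $([L,N],L)$ contribute only $O(\epsilon)$ on the shrinking inner sphere, so that the jump cleanly isolates $u(z)$; once this is in hand the argument reduces to the classical boundary-potential reduction found in \cite{Mc00,HW08}. A minor bookkeeping point is ensuring that $\gamma u,\B u\in H^{1/2}(\partial\Omega)$, which is automatic from $u\in H^2(\Omega)$ and which guarantees that $\G\B^*(\gamma u)$ and $\G\gamma^*(\B u)$ are well-defined as boundary potentials.
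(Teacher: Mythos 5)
The paper does not supply its own proof of this theorem; it states the third Green's identity as a known fact from potential theory and defers to McLean \cite{Mc00} and Hsiao--Wendland \cite{HW08}. So there is no in-paper argument to compare against. Your proposal reproduces the standard proof found in those references: symmetrize the first Green's identity \eqref{sesqulinear} using the Hermitian symmetry of $\Phi$ to obtain the second Green's identity
\[\braket{\Box u, v}-\braket{u,\Box v}=\braket{\gamma u,\B v}_{\partial\Omega}-\braket{\B u,\gamma v}_{\partial\Omega},\]
then either excise a small ball about $z$ and take $\epsilon\to 0$, or extend $u$ by zero and compute $\Delta\tilde u=\tilde f+\B^*(\gamma u)-\gamma^*(\B u)$ in $\mathscr{E}'(\mathbb{C}^2)$ and apply $\G$. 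That is exactly how this identity is proved in the literature, and your outline is correct.

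Two small points worth tightening. First, in the excision route the conormal operator on the inner sphere $\partial B(z,\epsilon)$ is \emph{not} the operator $\B$ of the paper, which is built from $L,N$ adapted to $\partial\Omega$; it is the conormal derivative adapted to the sphere, i.e.\ essentially the radial derivative plus zeroth-order terms intrinsic to the sphere. Your claim that the zeroth-order pieces contribute $O(\epsilon^{-2}\cdot\epsilon^{3})$ and vanish is correct, but it should be phrased for the sphere's own conormal so the book-keeping is honest; the distributional route (extension by zero) avoids this issue entirely and is the cleaner way to finish, which is also why McLean organizes the argument that way. Second, you should check the sign conventions against \eqref{repofu}: with $\SL=\G\gamma^*$ and $\DL=\G\B^*$ the identity reads $u=\G\tilde f+\DL(\gamma u)-\SL(\B u)$, and one must verify that the pairing conventions (sesquilinear versus bilinear, placement of conjugates on $\B^*$ and $\gamma^*$) used in your two boundary pairings $\inn{\gamma u,\B\G(\cdot-z)}[\partial\Omega]$ and $\inn{\B u,\gamma\G(\cdot-z)}[\partial\Omega]$ are consistent with the paper's $\braket{\cdot|\cdot}$ and $\inn{\cdot,\cdot}$ conventions; the paper itself has a sign slip between the unnumbered display following \eqref{repofu} and \eqref{variationthird}, so care here is warranted. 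Neither issue is a gap in the idea, only in the bookkeeping.
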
 

Putting $\gamma$ and $\B$ on the two sides of (\ref{repofu}), we obtain the following two identities: \[\gamma u=\gamma \G \tilde{f}+\gamma\DL(\gamma u)-\gamma\SL(\B u)\enskip \text{and} \enskip\B u=\B \G \tilde{f}+\B\DL(\gamma u)-\B\SL(\B u),\] in $\Omega$. We further define the following operators: \[S=\gamma\SL,\quad R=-\B\DL\enskip \text{and} \enskip T=2\gamma\DL+\mathrm{id}, \quad T^*=2\B\SL-\mathrm{id}.\]

 These imply that
 \begin{equation}\label{variationthird}
 	\begin{cases}
 		\gamma u=\gamma\G f+\gamma\SL\B u-\gamma\DL\gamma u=\gamma\G f+S\B u+\frac{1}{2}(\gamma u-T\gamma u)\\
 		\B u=\B\G f+\B\SL\B u-\B\DL\gamma u=\B\G f+\frac{1}{2}(\B u+T^*\B u)+R\gamma u.
 	\end{cases}
 \end{equation}

In the classical theory of boundary integral equations in $\mathbb{R}^n$, the operators $S$, $R$, $T$ and $T^*$ can be written as follows:
\[S\psi(x)=\int_{\partial\Omega}\G(x,y)\psi(y)\,d\sigma_y,\] \[T\psi(x)=2\lim_{\epsilon\to 0}\int_{\partial\Omega\backslash\mathbb{B}_\epsilon(z)}\left(\B_y \G(x,y)^*\right)^*\psi(y)\,d\sigma_y,\] \[T^*\psi(x)=2\lim_{\epsilon\to 0}\int_{\partial\Omega\backslash\mathbb{B}_\epsilon(z)}\B_x \G(y,x)^*\psi(y)\,d\sigma_y\] and \[R\psi(x)=-\underset{\epsilon\to 0}{\finitepart}\int_{\partial\Omega\backslash\mathbb{B}_\epsilon(x)}\B_x (\B_y \G(y,x))^*\psi(y)\,d\sigma_y,\] where the $\finitepart$ denotes the Hadamard finite part.

In the above integrals, $\G$ denotes the kernel of the fundamental solution of $\Delta=-\Box$.

\section{The reduction}\label{red}

For the following, we will discuss the $\bar{\partial}$-Neumann problem as the following form: for $f\in L^2_{0,1}(\Omega)$,
\begin{align}\label{dbarneumann}
	\begin{split}
			-\Box u=f&\quad\text{in}\quad\Omega\\
		u_2=0&\quad\text{on}\quad\partial\Omega\\
		\overline{N}u_1=-2u_1\cdot (L, [L,N])&\quad\text{on}\quad\partial\Omega
	\end{split}
\end{align}where $u=\begin{pmatrix}
	u_1\\u_2
\end{pmatrix}\in H^2(\Omega)$ represents $u=u_1\omega_{\overline{L}}+u_2\omega_{\overline{N}}$. 

We explain this reformulation as follows: Let $\gamma$ denotes the trace operator on boundary and let $(\gamma u)_i$ denote the $i$-th component of $\gamma u$ for $i=1,2$. It is obvious that, $u\in\Domain(\bar{\partial}^*)\cap  H^2_{0,1}(\overline{\Omega})$ if and only if $(\gamma u)_2=0$ (i.e., $\gamma u$ vanishes on the $\omega_{\overline{N}}$-component) on $\partial\Omega$.

Similarly, $u\in\Domain(\bar{\partial})$ and $\bar{\partial}u\in\Domain(\bar{\partial})^*$ if and only if $u_2|_{\partial\Omega}=0$ and $\overline{N}u_1=-2u_1\cdot g(L, [L,N])$ on $\partial\Omega$. In other words, one can say  $u\in\Domain(\bar{\partial})$ and $\bar{\partial}u\in\Domain(\bar{\partial})^*$ if and only if $(\gamma u)_2=0$ and $(\mathcal{B} u)_1=0$, where $(\mathcal{B} u)_i$ denotes the $i$-th component of $\mathcal{B} u$.

By the third Green's identity in matrix form, we have that,
\[u_1=\G_{11}f_1+\G_{12}f_2+\SL_{11}(\B u)_1+\SL_{12}(\B u)_2-\DL_{11}(\gamma u)_1-\DL_{12}(\gamma u)_2\] and \[u_2=\G_{21}f_1+\G_{22}f_2+\SL_{21}(\B u)_1+\SL_{22}(\B u)_2-\DL_{21}(\gamma u)_1-\DL_{22}(\gamma u)_2.\]

Applying $\gamma$ and $\B$ on both sides, in matrix form, we obtain \[\frac{1}{2}\begin{pmatrix}
	\gamma u_1\\\gamma u_2
\end{pmatrix}=\gamma\G f+\begin{pmatrix}
S_{11}(\B u)_1+S_{12}(\B u)_2\\
S_{21}(\B u)_1+S_{22}(\B u)_2
\end{pmatrix}-\frac{1}{2}\begin{pmatrix}
T_{11}\gamma u_1+T_{12}\gamma u_2\\
T_{21}\gamma u_1+T_{22}\gamma u_2
\end{pmatrix}\] and 
\[\frac{1}{2}\begin{pmatrix}
	(\B u)_1\\(\B u)_2
\end{pmatrix}=\B\G f+\frac{1}{2}\begin{pmatrix}
T^*_{11}(\B u)_1+T^*_{12}(\B u)_2\\
T^*_{21}(\B u)_1+T^*_{22}(\B u)_2
\end{pmatrix}+\begin{pmatrix}
R_{11}\gamma u_1+R_{12}\gamma u_2\\
R_{21}\gamma u_1+R_{22}\gamma u_2
\end{pmatrix}.\]

We now turn to compute the kernel of $T$, $S$ and $R$ in terms of our special boundary coordinates ($\omega_{\overline{L}}$ and $\omega_{\overline{N}}$).

Let $\mathrm{G}_0$ denote the fundamental solution of $-\Box$. Recall that if $u=s\,d\bar{z}_1+t\,d\bar{z}_2$ then \[\Box u=-\begin{pmatrix}
	\Delta&0\\0&\Delta
\end{pmatrix}\begin{pmatrix}
s\\t
\end{pmatrix}.\] It is well-known that the fundamental solution $G_0$ of $\Delta$ has the following form, \[\mathrm{G}_0(f\,d\bar{z}_1+h\,d\bar{z}_2)=\frac{1}{4}(\pi)^{-2}\int_{\mathbb{R}^4}\frac{f(w)}{|z-w|^2}\,d\bar{z}_1+\frac{h(w)}{|z-w|^2}\,d\bar{z}_2\,d\sigma_w=\mathrm{G}_0f\,d\bar{z}_1+\mathrm{G}_0h\,d\bar{z}_2.\]


The formula for $G_0$ is based on the classical coordinates $d\bar{z}_1$ and $d\bar{z}_2$. In order to find the fundamental solution $\G$ based on the coordinates $\omega_{\overline{L}}$ and $\omega_{\overline{N}}$, we do coordinate changes as follows:
\begin{align*}
	&\G(u_1\omega_{\overline{L}}+u_2\omega_{\overline{N}})\\
	=&\frac{1}{2}\mathrm{G}_0(u_1\omega_{\overline{L}}+u_2\omega_{\overline{N}},\,d\bar{z}_1)\,d\bar{z}_1+\frac{1}{2}\mathrm{G}_0(u_1\omega_{\overline{L}}+u_2\omega_{\overline{N}},\,d\bar{z}_2)\,d\bar{z}_2\\
	=&\frac{1}{4}(\mathrm{G}_0(u_1\omega_{\overline{L}}+u_2\omega_{\overline{N}},\,d\bar{z}_1)\,d\bar{z}_1+\mathrm{G}_0(u_1\omega_{\overline{L}}+u_2\omega_{\overline{N}},\,d\bar{z}_2)\,d\bar{z}_2,\omega_{\overline{L}})\omega_{\overline{L}}\\&+\frac{1}{4}(\mathrm{G}_0(u_1\omega_{\overline{L}}+u_2\omega_{\overline{N}},\,d\bar{z}_1)\,d\bar{z}_1+\mathrm{G}_0(u_1\omega_{\overline{L}}+u_2\omega_{\overline{N}},\,d\bar{z}_2)\,d\bar{z}_2,\omega_{\overline{N}})\omega_{\overline{N}}\\
	=&4\begin{pmatrix}
		\frac{\partial\delta}{\partial\bar{z}_2}&-\frac{\partial\delta}{\partial\bar{z}_1}\\\frac{\partial\delta}{\partial z_1}&\frac{\partial\delta}{\partial z_2}
	\end{pmatrix}\begin{pmatrix}
	\mathrm{G}_0&0\\0&\mathrm{G}_0
\end{pmatrix}\begin{pmatrix}
\frac{\partial\delta}{\partial z_2}&\frac{\partial\delta}{\partial\bar{z}_1}\\-\frac{\partial\delta}{\partial z_1}&\frac{\partial\delta}{\partial\bar{z}_2}
\end{pmatrix}\begin{pmatrix}
u_1\\u_2
\end{pmatrix}\\
=&4\begin{pmatrix}
	\frac{\partial\delta}{\partial\bar{z}_2}G_0(\frac{\partial\delta}{\partial z_2}u_1+\frac{\partial\delta}{\partial\bar{z}_1}u_2)-\frac{\partial\delta}{\partial\bar{z}_1}G_0(-\frac{\partial\delta}{\partial z_1}u_1+\frac{\partial\delta}{\partial\bar{z}_2}u_2)\\
	\frac{\partial\delta}{\partial z_1}G_0(\frac{\partial\delta}{\partial z_2}u_1+\frac{\partial\delta}{\partial\bar{z}_1}u_2)+\frac{\partial\delta}{\partial z_2}G_0(-\frac{\partial\delta}{\partial z_1}u_1+\frac{\partial\delta}{\partial\bar{z}_2}u_2)
\end{pmatrix}\\
=&\begin{pmatrix}
	\G_{11}&\G_{12}\\
	\G_{21}&\G_{22}
\end{pmatrix}\begin{pmatrix}
u_1\\u_2
\end{pmatrix}.
\end{align*}

Here,  \begin{equation}\label{forG}
	\begin{split}
		\G_{11}u_1&=(\pi)^{-2}\int\frac{\frac{\partial\delta}{\partial\bar{z}_2}(z)\frac{\partial\delta}{\partial z_2}(w)+\frac{\partial\delta}{\partial\bar{z}_1}(z)\frac{\partial\delta}{\partial z_1}(w)}{|z-w|^2}u_1(w)\,d\sigma_w\\
		\G_{12}u_2&=(\pi)^{-2}\int\frac{\frac{\partial\delta}{\partial\bar{z}_2}(z)\frac{\partial\delta}{\partial \bar{z}_1}(w)-\frac{\partial\delta}{\partial\bar{z}_1}(z)\frac{\partial\delta}{\partial\bar{z}_2}(w)}{|z-w|^2}u_2(w)\,d\sigma_w\\
		\G_{21}u_1&=(\pi)^{-2}\int\frac{\frac{\partial\delta}{\partial z_1}(z)\frac{\partial\delta}{\partial z_2}(w)-\frac{\partial\delta}{\partial z_2}(z)\frac{\partial\delta}{\partial z_1}(w)}{|z-w|^2}u_1(w)\,d\sigma_w\\
		\G_{22}u_2&=(\pi)^{-2}\int\frac{\frac{\partial\delta}{\partial z_1}(z)\frac{\partial\delta}{\partial\bar{z}_1}(w)+\frac{\partial\delta}{\partial z_2}(z)\frac{\partial\delta}{\partial\bar{z}_2}(w)}{|z-w|^2}u_2(w)\,d\sigma_w
	\end{split}
\end{equation}


Since we need to compute kernels which have $z=(z_1, z_2), w=(w_1, w_2)$ variables, it is inevitable to have product of functions in $z$ and functions in $w$. For this, we define the following notation \[\braket{N_z\cdot N_w}=\braket{2\frac{\partial\delta}{\partial\bar{z}_1}\frac{\partial}{\partial z_1}+2\frac{\partial\delta}{\partial\bar{z}_2}\frac{\partial}{\partial z_2}\cdot 2\frac{\partial\delta}{\partial\bar{w}_1}\frac{\partial}{\partial w_1}+2\frac{\partial\delta}{\partial\bar{w}_2}\frac{\partial}{\partial w_2}}=2\left(\frac{\partial\delta}{\partial\bar{z}_2}(z)\frac{\partial\delta}{\partial z_2}(w)+\frac{\partial\delta}{\partial\bar{z}_1}(z)\frac{\partial\delta}{\partial z_1}(w)\right).\] 

We also abuse the notation and denote $2\frac{\partial\delta}{\partial \bar{z}_1}(\bar{z}_1-\bar{w}_1)+2\frac{\partial\delta}{\partial \bar{z}_2}(\bar{z}_2-\bar{w}_2)$ by $\braket{N_z\cdot (z-w)}$. One can check that \[|\braket{N_z\cdot (z-w)}|^2+|\braket{L_z\cdot (z-w)}|^2=2|z-w|^2.\]

Similarly, we can define $\braket{N_z\cdot L_w}$, $\braket{L_z\cdot L_w}$, $\braket{L_z\cdot (z-w)}$.

Now, we can simplify (\ref{forG}) to \[\G_{11}u_1=\frac{1}{2}(\pi)^{-2}\int\frac{\braket{L_w\cdot L_z}}{|z-w|^2}u_1(w)\,d\sigma_w=\frac{1}{2}(\pi)^{-2}\int\frac{\braket{N_z\cdot N_w}}{|z-w|^2}u_1(w)\,d\sigma_w\]
\[\G_{12}u_2=\frac{1}{2}(\pi)^{-2}\int\frac{\braket{N_w\cdot L_z}}{|z-w|^2}u_2(w)\,d\sigma_w=-\frac{1}{2}(\pi)^{-2}\int\frac{\braket{N_z\cdot L_w}}{|z-w|^2}u_2(w)\,d\sigma_w\]
\[\G_{21}u_1=\frac{1}{2}(\pi)^{-2}\int\frac{\braket{L_w\cdot N_z}}{|z-w|^2}u_1(w)\,d\sigma_w=-\frac{1}{2}(\pi)^{-2}\int\frac{\braket{L_z\cdot N_w}}{|z-w|^2}u_1(w)\,d\sigma_w\]
\[\G_{22}u_2=\frac{1}{2}(\pi)^{-2}\int\frac{\braket{N_w\cdot N_z}}{|z-w|^2}u_2(w)\,d\sigma_w=\frac{1}{2}(\pi)^{-2}\int\frac{\braket{L_z\cdot L_w}}{|z-w|^2}u_2(w)\,d\sigma_w\]

Applying the conormal derivative, we get the kernel of $T^*$ (For the following, we abuse the notation $\G$ for both the fundamental solution and the kernel of the fundamental solution.):

\begin{align*}
	\B_z\G(z,w)
	=(\pi)^{-2}\left(\begin{pmatrix}
		\overline{N}&-\overline{L}\\
		L&N
	\end{pmatrix}+2\begin{pmatrix}
		(L,[L,N])&0\\
		0&([L,N],L)
	\end{pmatrix}\right)\begin{pmatrix}
	\frac{\braket{L_w\cdot L_z}}{|z-w|^2}&\frac{\braket{N_w\cdot L_z}}{|z-w|^2}\\\frac{\braket{L_w\cdot N_z}}{|z-w|^2}&\frac{\braket{N_w\cdot N_z}}{|z-w|^2}
\end{pmatrix}
\end{align*}


To simplify the matrix above, we use  \[\frac{\partial}{\partial z_j}\frac{1}{|z-w|^2}=-\frac{\bar{z}_j-\bar{w}_j}{|z-w|^4}\enskip \text{and} \enskip\frac{\partial}{\partial w_j}\frac{1}{|z-w|^2}=-\frac{\bar{w}_j-\bar{z}_j}{|z-w|^4}.\]

Consequently, we can simplify the kernel of $T^*$ as follows. We first formulate $\B_z\G(z,w)$ with two equivalent forms: \[\begin{split}
	&\B_z\G(z,w)\\
=&(\pi)^{-2}\left(\begin{pmatrix}
	\overline{N}&-\overline{L}\\
	L&N
\end{pmatrix}+2\begin{pmatrix}
	(L,[L,N])&0\\
	0&([L,N],L)
\end{pmatrix}\right)\begin{pmatrix}
	\frac{\braket{L_w\cdot L_z}}{|z-w|^2}&\frac{\braket{N_w\cdot L_z}}{|z-w|^2}\\\frac{\braket{L_w\cdot N_z}}{|z-w|^2}&\frac{\braket{N_w\cdot N_z}}{|z-w|^2}
\end{pmatrix}\\
=&(\pi)^{-2}\left(\begin{pmatrix}
	\overline{N}&-\overline{L}\\
	L&N
\end{pmatrix}+2\begin{pmatrix}
	(L,[L,N])&0\\
	0&([L,N],L)
\end{pmatrix}\right)\begin{pmatrix}
	\frac{\braket{N_z\cdot N_w}}{|z-w|^2}&-\frac{\braket{N_z\cdot L_w}}{|z-w|^2}\\-\frac{\braket{L_z\cdot N_w}}{|z-w|^2}&\frac{\braket{L_z\cdot L_w}}{|z-w|^2}
\end{pmatrix}.
\end{split}\] Indeed, \[\begin{pmatrix}
\frac{\braket{L_w\cdot L_z}}{|z-w|^2}&\frac{\braket{N_w\cdot L_z}}{|z-w|^2}\\\frac{\braket{L_w\cdot N_z}}{|z-w|^2}&\frac{\braket{N_w\cdot N_z}}{|z-w|^2}
\end{pmatrix}=\begin{pmatrix}
\frac{\braket{N_z\cdot N_w}}{|z-w|^2}&-\frac{\braket{N_z\cdot L_w}}{|z-w|^2}\\-\frac{\braket{L_z\cdot N_w}}{|z-w|^2}&\frac{\braket{L_z\cdot L_w}}{|z-w|^2}
\end{pmatrix}.\]

We will then simplify $\B_z\G(z,w)$ using terms from the above two equivalent forms whichever is more helpful. For example, in the first form, \[\overline{N}_z\braket{L_w\cdot L_z}-\overline{L}_z\braket{L_w\cdot N_z}=\braket{L_w\cdot [N, L]_z}\] while in the second form $\overline{N}_z\braket{N_z\cdot N_w}+\overline{L}_z\braket{L_z\cdot N_w}$ does not seem to be easily simplified.
We simplify that
\begin{align*}
	&\B_z\G(z,w)\\
=&(\pi)^{-2}\Bigg(\begin{pmatrix}
	\frac{\braket{L_w\cdot [N,L]_z}}{|z-w|^2}&\frac{\braket{N_w\cdot [N,L]_z}}{|z-w|^2}\\
	-\frac{\braket{[N,L]_z\cdot N_w}}{|z-w|^2}&\frac{\braket{[N,L]_z\cdot L_w}}{|z-w|^2}
\end{pmatrix}\\&-\begin{pmatrix}
\frac{\braket{N_z\cdot N_w}\braket{(z-w), N}_z+\braket{L_z\cdot N_w}\braket{(z-w), L}_z}{|z-w|^4}&\frac{-\braket{N_z\cdot L_w}\braket{(z-w), N}_z-\braket{L_z\cdot L_w}\braket{(z-w),L}_z}{|z-w|^4}\\
\frac{\braket{L_w\cdot N_z}\braket{N, (z-w)}_z+\braket{L_w\cdot L_z}\braket{L, (z-w)}_z}{|z-w|^4}&\frac{\braket{N_w\cdot N_z}\braket{N, (z-w)}_z+\braket{N_w\cdot L_z}\braket{L, (z-w)}_z}{|z-w|^4}
\end{pmatrix}\\&+\begin{pmatrix}
	\frac{\braket{L_w\cdot 2([L,N]_z, L_z)L_z}}{|z-w|^2}&\frac{\braket{N_w\cdot 2([L,N]_z, L_z)L_z}}{|z-w|^2}\\
	-\frac{\braket{2([L_z,N_z],L_z)L_z\cdot N_w}}{|z-w|^2}&\frac{\braket{2([L_z,N_z],L_z)L_z\cdot L_w}}{|z-w|^2}
\end{pmatrix}\Bigg)\\
=&-\frac{1}{2}(\pi)^{-2}\begin{pmatrix}
	\frac{\braket{(z-w)\cdot N_w}}{|z-w|^4}&-\frac{\braket{(z-w)\cdot L_w}}{|z-w|^4}\\
	\frac{\braket{L_w\cdot (z-w)}}{|z-w|^4}&\frac{\braket{N_w\cdot (z-w)}}{|z-w|^4}
\end{pmatrix}.
\end{align*} The last equation is due to the simple observation $([L,N], N)=([L,N], \nabla\delta)=[L,N]\delta=0$.

Furthermore, by switching $z$ and $w$, \[\B_w\G(w,z)=-\frac{1}{2}(\pi)^{-2}\begin{pmatrix}
\frac{\braket{(w-z)\cdot N_z}}{|z-w|^4}&-\frac{\braket{(w-z)\cdot L_z}}{|z-w|^4}\\
\frac{\braket{L_z\cdot (w-z)}}{|z-w|^4}&\frac{\braket{N_z\cdot (w-z)}}{|z-w|^4}
\end{pmatrix}.\]

The kernel of $T$ can be easily obtained as follows: \[(\B_w\G(w,z))^*=-\frac{1}{2}(\pi)^{-2}\begin{pmatrix}
	\frac{\braket{N_z\cdot (w-z)}}{|z-w|^4}&\frac{\braket{(w-z)\cdot L_z}}{|z-w|^4}\\
	-\frac{\braket{L_z\cdot (w-z)}}{|z-w|^4}&\frac{\braket{(w-z)\cdot N_z}}{|z-w|^4}
\end{pmatrix},\] where $*$ denotes the transpose and conjugation.

To obtain the kernel of $R$, we define $(w-z)^\perp=\begin{pmatrix}
	w_1-z_1\\w_2-z_2
\end{pmatrix}^\perp=\begin{pmatrix}
	w_2-z_2\\ z_1-w_1
\end{pmatrix}$. In this way, we may get another expression of $(\B_w\G(w,z))^*$ as follows \[(\B_w\G(w,z))^*=-\frac{1}{2}(\pi)^{-2}\begin{pmatrix}
\frac{\braket{(\bar{w}-\bar{z})^\perp\cdot L_z}}{|z-w|^4}&-\frac{\braket{N_z\cdot (\bar{w}-\bar{z})^\perp}}{|z-w|^4}\\
\frac{\braket{(\bar{w}-\bar{z})^\perp\cdot N_z}}{|z-w|^4}&\frac{\braket{L_z\cdot (\bar{w}-\bar{z})^\perp}}{|z-w|^4}
\end{pmatrix}.\]

Observe that \[N_z(|z-w|^2)^{-2}=-2(|z-w|^2)^{-3}\left(2\frac{\partial\delta}{\partial \bar{z}_1}(\bar{z}_1-\bar{w}_1)+2\frac{\partial\delta}{\partial \bar{z}_2}(\bar{z}_2-\bar{w}_2)\right)=-2(|z-w|^2)^{-3}\braket{N_z\cdot z-w}\] and \[L_z(|z-w|^2)^{-2}=-2(|z-w|^2)^{-3}\braket{L_z\cdot z-w}.\]
We now ready to compute the kernel of $R=R^*$:
\begin{align*}
	-\B_z(\B_w\G(w,z))^*
	=&(\pi)^{-2}\Bigg(\begin{pmatrix}
		\overline{N}_z&-\overline{L}_z\\
		L_z&N_z
	\end{pmatrix}\begin{pmatrix}
	\frac{\braket{(\bar{w}-\bar{z})^\perp\cdot L_z}}{|z-w|^4}&-\frac{\braket{N_z\cdot (\bar{w}-\bar{z})^\perp}}{|z-w|^4}\\
	\frac{\braket{(\bar{w}-\bar{z})^\perp\cdot N_z}}{|z-w|^4}&\frac{\braket{L_z\cdot (\bar{w}-\bar{z})^\perp}}{|z-w|^4}
\end{pmatrix}\\&+\begin{pmatrix}
		2(L,[L,N])_z&0\\
		0&2([L,N],L)_z
	\end{pmatrix}\begin{pmatrix}
	\frac{\braket{(\bar{w}-\bar{z})^\perp\cdot L_z}}{|z-w|^4}&-\frac{\braket{N_z\cdot (\bar{w}-\bar{z})^\perp}}{|z-w|^4}\\
	\frac{\braket{(\bar{w}-\bar{z})^\perp\cdot N_z}}{|z-w|^4}&\frac{\braket{L_z\cdot (\bar{w}-\bar{z})^\perp}}{|z-w|^4}
\end{pmatrix}\Bigg)\\
=&(\pi)^{-2}\Bigg(\begin{pmatrix}
	-\frac{2}{|z-w|^4}&0\\
	0&-\frac{2}{|z-w|^4}
\end{pmatrix}+\begin{pmatrix}
	-\frac{2}{|z-w|^4}&0\\0&-\frac{2}{|z-w|^4}
\end{pmatrix}\Bigg)\\
=&-4(\pi)^{-2}\begin{pmatrix}
	\frac{1}{|z-w|^4}&0\\
	0&\frac{1}{|z-w|^4}
\end{pmatrix}
\end{align*}

We have obtained all kernels.

\begin{proposition}\label{calc}
	The operators $S: H^{s-\frac{1}{2}}(\partial\Omega)\rightarrow H^{s+\frac{1}{2}}(\partial\Omega)$, $T: H^{s+\frac{1}{2}}(\partial\Omega)\rightarrow H^{s+\frac{1}{2}}(\partial\Omega)$ and $R:H^{s+\frac{1}{2}}(\partial\Omega)\rightarrow H^{s-\frac{1}{2}}(\partial\Omega)$ for all $s\geq 0$ have the following explicit formulae: 
	\[S\begin{pmatrix}
		\psi_1\\\psi_2
	\end{pmatrix}(z)=\frac{1}{2}(\pi)^{-2}\bigint_{\partial\Omega}\begin{pmatrix}
		\frac{\braket{L_w\cdot L_z}}{|z-w|^2}&\frac{\braket{N_w\cdot L_z}}{|z-w|^2}\\
		\frac{\braket{L_w\cdot N_z}}{|z-w|^2}&\frac{\braket{N_w\cdot N_z}}{|z-w|^2}
	\end{pmatrix}\begin{pmatrix}
		\psi_1(w)\\\psi_2(w)
	\end{pmatrix}\,d\sigma_w\]
	
	\[T\begin{pmatrix}
		\psi_1\\\psi_2
	\end{pmatrix}(z)=-(\pi)^{-2}\lim_{\epsilon\to 0}\bigint_{\partial\Omega\backslash\mathbb{B}_\epsilon(z)}\begin{pmatrix}
		\frac{\braket{N_z\cdot (w-z)}}{|z-w|^4}&\frac{\braket{(w-z)\cdot L_z}}{|z-w|^4}\\
		-\frac{\braket{L_z\cdot (w-z)}}{|z-w|^4}&\frac{\braket{(w-z)\cdot N_z}}{|z-w|^4}
	\end{pmatrix}\begin{pmatrix}
		\psi_1(w)\\\psi_2(w)
	\end{pmatrix}\,d\sigma_w\]
	and 
	\[R\begin{pmatrix}
		\psi_1\\\psi_2
	\end{pmatrix}(z)=4(\pi)^{-2}\underset{\epsilon\to 0}{\finitepart}\bigint_{\partial\Omega\backslash\mathbb{B}_\epsilon(z)}\begin{pmatrix}
		\frac{1}{|z-w|^4}&0\\
		0&\frac{1}{|z-w|^4}
	\end{pmatrix}\begin{pmatrix}
		\psi_1(w)\\\psi_2(w)
	\end{pmatrix}\,d\sigma_w\]
\end{proposition}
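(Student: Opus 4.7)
The plan is to combine the kernel computations carried out immediately above with the abstract identities $S=\gamma\SL$, $T=2\gamma\DL+\mathrm{id}$, and $R=-\B\DL$ from Section \ref{pre}, together with the classical jump relations of potential theory. The explicit formula for $S$ drops out directly: from $\SL=\G\gamma^*$ one has $\SL\psi(z)=\int_{\partial\Omega}\G(z,w)\psi(w)\,d\sigma_w$ for $z\in\Omega$, and the matrix kernel $\G(z,w)$ has already been written down in \eqref{forG}. In $\mathbb{C}^2\cong\mathbb{R}^4$ the factor $|z-w|^{-2}$ is the Newtonian kernel of order $-2$, so its restriction to the smooth $3$-dimensional boundary $\partial\Omega$ is weakly singular and $\gamma\SL\psi$ extends continuously to $\partial\Omega$ with no jump. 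The displayed formula for $S\psi$ then follows verbatim.

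For $T$, I would invoke the kernel $(\B_w\G(w,z))^*$ just computed. Smoothness of $\partial\Omega$ gives $\braket{N_z\cdot(w-z)}=O(|z-w|^2)$ for $w$ near $z$ along $\partial\Omega$ (the normal component of a chord is quadratic in distance), so each entry of the kernel is of Calder\'on--Zygmund type on the $3$-dimensional boundary. Taking the non-tangential limit as $z$ approaches $\partial\Omega$ from inside and applying the Plemelj--Sokhotski jump relation for the double layer (cf.\ McLean \cite{Mc00}, Hsiao--Wendland \cite{HW08}) yields $\gamma\DL\psi=\tfrac{1}{2}(T\psi-\psi)$, which rearranges to the claimed principal value representation of $T=2\gamma\DL+\mathrm{id}$. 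The matching of coefficients (the factor $2$ and the sign of the identity piece) is forced by the orientation convention built into $\delta$, $L$, $N$, and $\B$ in Section \ref{pre}.

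For $R=-\B\DL$, the kernel $-\B_z(\B_w\G(w,z))^*$ simplified in the computation above to the diagonal expression $-4\pi^{-2}|z-w|^{-4}I_2$; this is hypersingular on the $3$-dimensional $\partial\Omega$, and is interpreted via Hadamard's finite part, as is standard for such second-order layer operators. The diagonal structure is a genuine cancellation from the identities $([L,N],N)=0$ and $(\nabla_LL,L)=-(\nabla_LN,N)$ used in the simplifications. Finally, the Sobolev mapping properties are the standard consequences of the pseudodifferential orders of the three kernels: $S$ has order $-1$, $T$ has order $0$, and $R$ has order $+1$ as operators on $\partial\Omega$, which yields the claimed continuity $H^{s-\frac{1}{2}}\to H^{s+\frac{1}{2}}$, $H^{s+\frac{1}{2}}\to H^{s+\frac{1}{2}}$, and $H^{s+\frac{1}{2}}\to H^{s-\frac{1}{2}}$ respectively for all $s\geq 0$.

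The main obstacle is bookkeeping: one must verify that the jump for $T$ and the finite-part regularization for $R$ produce exactly the stated coefficients, and that the various $\frac{1}{2}\pi^{-2}$, $\pi^{-2}$, $4\pi^{-2}$ prefactors survive consistently through the switch between the classical frame $d\bar{z}_j$ and the adapted frame $\omega_{\overline{L}},\omega_{\overline{N}}$. Once the kernel identifications are locked in, the Sobolev continuity is a citation to the standard theory.
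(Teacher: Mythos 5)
Your proposal is correct and follows essentially the same route as the paper: the paper's ``proof'' of Proposition~\ref{calc} is precisely the preceding block of kernel computations (first $\G$ in the adapted frame, then $\B_z\G(z,w)$, $(\B_w\G(w,z))^*$, and $-\B_z(\B_w\G(w,z))^*$), after which the displayed formulae are read off from the abstract definitions $S=\gamma\SL$, $T=2\gamma\DL+\mathrm{id}$, $R=-\B\DL$ and the standard integral/PV/finite-part representations together with the usual Sobolev mapping orders, exactly as you do. Your explicit invocation of the Plemelj--Sokhotski jump relation and the pseudodifferential order count is just an unrolling of the textbook facts the paper silently imports from McLean and Hsiao--Wendland, not a different method.
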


Our main theorem of the current section is as follows. 

\begin{theorem}\label{maintheorem}
	Let $\Omega$ be a bounded pseudoconvex domain with smooth boudnary in $\mathbb{C}^2$. Let $f\in L^2(\Omega)$. 
	\begin{enumerate}
		\item\label{1} If $u\in H^2(\Omega)$ is a solution of the $\bar{\partial}$-Neumann problem (\ref{dbarneumann}), then $\psi=\begin{pmatrix}
			\psi_1\\\psi_2
		\end{pmatrix}=\gamma u\in H^{\frac{3}{2}}(\partial\Omega)$ and $\phi=\begin{pmatrix}
			\phi_1\\\phi_2
		\end{pmatrix}=\B u\in H^{\frac{1}{2}}(\partial\Omega)$ satisfies the boundary integral equation:
		\begin{equation}\label{boundaryintegral}
			\begin{cases}
				\frac{1}{2}\begin{pmatrix}
					\psi_1\\0
				\end{pmatrix}&=\gamma\G f+
				S\phi-\frac{1}{2}T\psi\\
				\frac{1}{2}\begin{pmatrix}
					0\\\phi_2
				\end{pmatrix}&=\B\G f+\frac{1}{2}T^*\phi+R\psi.
			\end{cases}		
		\end{equation}
		\item\label{2} On the contrary, if $\psi$ and $\phi$ satisfy the (\ref{boundaryintegral}), then $u=\G f+\SL\phi-\DL\psi$ is the unique solution of the $\bar{\partial}$-Neumann problem.
	\end{enumerate}
	
	Moreover, the global regularity holds for the $\bar{\partial}$-Neumann problem if and only if the solutions of $\psi$ and $\phi$ are smooth for $f\in C^\infty(\overline{\Omega})$.
\end{theorem}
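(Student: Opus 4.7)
The plan is to treat the three claims in turn, each resting on the third Green identity (equivalently, its variational form \eqref{variationthird}) together with the jump identities encoded in the definitions $T=2\gamma\DL+\mathrm{id}$, $T^*=2\B\,\SL-\mathrm{id}$, $S=\gamma\,\SL$, and $R=-\B\,\DL$. The boundary conditions of \eqref{dbarneumann} translate, as already noted in the paragraph after \eqref{dbarneumann}, into the pair of component constraints $\psi_2=(\gamma u)_2=0$ and $\phi_1=(\B u)_1=0$; I will carry these along as parsed constraints on the boundary data throughout both directions.

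For the forward direction of Part \eqref{1}, I would start from a solution $u\in H^2(\Omega)$ of \eqref{dbarneumann} and immediately invoke the trace theorem to conclude $\psi=\gamma u\in H^{3/2}(\partial\Omega)$ and $\phi=\B u\in H^{1/2}(\partial\Omega)$. Since $-\Box u=f$ is precisely $\Delta u=f$ under the paper's sign convention, the third Green identity produces the representation $u=\G f+\SL\phi-\DL\psi$ in $\Omega$. Taking the trace and using $\gamma\SL=S$ together with $\gamma\DL=\tfrac{1}{2}(T-\mathrm{id})$ gives the first line of \eqref{boundaryintegral} after moving $\tfrac{1}{2}\psi$ to the left; the constraint $\psi_2=0$ turns that vector into $\tfrac{1}{2}\begin{pmatrix}\psi_1\\0\end{pmatrix}$. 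An analogous computation applying $\B$, now with $\B\SL=\tfrac{1}{2}(T^*+\mathrm{id})$ and $\B\DL=-R$, together with $\phi_1=0$, yields the second line.

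For Part \eqref{2}, I would define $u:=\G f+\SL\phi-\DL\psi$. Because $\SL\phi$ and $\DL\psi$ are harmonic off $\partial\Omega$ and $\Delta\G f=f$ in $\Omega$, the equation $-\Box u=f$ holds automatically in $\Omega$. Applying $\gamma$ to $u$ and using the first line of \eqref{boundaryintegral} gives
\[
\gamma u=\bigl(\gamma\G f+S\phi-\tfrac{1}{2}T\psi\bigr)+\tfrac{1}{2}\psi=\tfrac{1}{2}\begin{pmatrix}\psi_1\\0\end{pmatrix}+\tfrac{1}{2}\psi,
\]
which, under the standing constraint $\psi_2=0$, equals $\psi$; in particular $(\gamma u)_2=0$. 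The analogous identity for $\B u$ together with $\phi_1=0$ gives $\B u=\phi$ and therefore $(\B u)_1=0$. Uniqueness follows from the a priori bound $\|u\|_{L^2(\Omega)}\le C\|\Box u\|_{L^2(\Omega)}$ derived from Proposition \ref{MMH}: two solutions of \eqref{dbarneumann} differ by an element of $\Domain(\Box)$ with $\Box$-image zero, hence their difference is zero.

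For the global regularity equivalence, one direction is immediate from the trace theorem: if $u\in C^\infty(\overline{\Omega})$ then $\psi,\phi\in C^\infty(\partial\Omega)$. For the converse I would use the representation $u=\G f+\SL\phi-\DL\psi$ together with the explicit kernels in Proposition \ref{calc}: $\G f$ is smooth on $\overline{\Omega}$ by classical elliptic regularity of the Newtonian potential, while the single- and double-layer potentials with $C^\infty$ densities on the smooth boundary are smooth up to $\partial\Omega$ from the interior side by standard potential theory (see Hsiao--Wendland \cite{HW08} or McLean \cite{Mc00}). The main obstacle I anticipate is the careful bookkeeping of the $(2{\times}2)$ matrix jump relations in the converse direction --- in particular, verifying that the factor $\tfrac{1}{2}\begin{pmatrix}\psi_1\\0\end{pmatrix}$ on the left of \eqref{boundaryintegral} genuinely matches $\tfrac{1}{2}\psi$ and not something weaker, which is where the constraints $\psi_2=0,\phi_1=0$ must enter as part of the hypothesis on $\psi,\phi$; a lesser but still real issue is transporting the scalar boundary-smoothness theory of layer potentials to the matrix-valued kernels derived in Proposition \ref{calc}.
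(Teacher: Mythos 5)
Your proposal is correct and follows essentially the same route as the paper's own proof: both directions rest on the third Green identity in the form \eqref{variationthird} together with the jump relations $\gamma\DL=\tfrac12(T-\mathrm{id})$, $\B\SL=\tfrac12(T^*+\mathrm{id})$, $S=\gamma\SL$, $R=-\B\DL$, with uniqueness coming from the $L^2$ a priori estimate of Proposition~\ref{MMH} and the regularity equivalence coming from interior-plus-boundary smoothing of $\G$, $\SL$, $\DL$. The caveat you flag about Part~\eqref{2} is well observed: the system \eqref{boundaryintegral} as written does not by itself force $\psi_2=0$ or $\phi_1=0$, and the computation $\gamma u=\tfrac12\begin{pmatrix}\psi_1\\0\end{pmatrix}+\tfrac12\psi$ returns $(\gamma u)_2=\tfrac12\psi_2$, so these vanishing conditions must indeed be taken as part of the hypothesis on $(\psi,\phi)$ --- the paper treats this implicitly, and the Remark following the theorem (where only $\psi_1$ and $\phi_2$ appear) confirms that this is the intended reading.
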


\begin{proof}[Proof of Theorem \ref{maintheorem}]
	We prove \ref{1} first. Let $u\in H^2(\Omega)$ be a solution of the $\bar{\partial}$-Neumann problem. Plug the boundary condition $(\gamma u)_2=(\B u)_1=0$ into (\ref{variationthird}), and we have that
	\[\frac{1}{2}\begin{pmatrix}
		\gamma u_1\\0
	\end{pmatrix}=\gamma\G f+S\B u-\frac{1}{2}T\gamma u\enskip \text{and} 
	\enskip\frac{1}{2}\begin{pmatrix}
		0\\(\B u)_2
	\end{pmatrix}=\B\G f+\frac{1}{2}T^*\B u+R\gamma u,\] which is (\ref{boundaryintegral}). 
	
	For \ref{2}, we just need to verify that $u=\G f+\SL\phi-\DL\psi$ is one of the solutions of $\bar{\partial}$-Neumann problem. Then by the uniqueness of solutions, we may conclude the proof of \ref{2}. Clearly, $\Delta\SL\phi=0$ in $\Omega$ because for any $s\in C^\infty_c(\Omega)$, we have that \[\braket{\Delta\SL\psi|s}_\Omega=\braket{\G\gamma^*\psi|-\Delta s}_\Omega=-\braket{\psi|\gamma\G\Delta s}_{\partial\Omega}=-\braket{\psi|\gamma s}_{\partial\Omega}=0.\] Similarly $\Delta\DL\psi=0$ in $\Omega$. Consequently, $-\Box u=f$ in $\Omega$. 
	
	We now verify the boundary condition. For this aim we take \[\gamma u=\gamma\G f+\SL \phi-\gamma\DL\psi=\gamma\G f+S\B u+\frac{1}{2}(\gamma u-T\gamma u).\] By (\ref{boundaryintegral}), we see $(\gamma u)_2=0$. Similarly, we also get $(\B u)_1=0$. Thus, \ref{2} follows.
	
	The equivalence of regularities follows immediately by the following regularity fact: for all $s\in\mathbb{R}$, $\SL: H^{s-\frac{1}{2}}(\partial\Omega\cap G_1)\rightarrow H^{s+1}(G_2\cap\Omega)$ and $\DL: H^{s+\frac{1}{2}}(\partial\Omega\cap G_1)\rightarrow H^{s+1}(G_2\cap\Omega)$ for two open sets $\overline{G}_1\Subset G_2$.
\end{proof}

\begin{remark}
	Indeed, by observing the linearity ($S_{ij}(0)=T_{ij}(0)=R_{ij}(0)=0$) of $S$, $T$, $T^*$ and $R$, we can simplify \ref{1} a bit as follows. Under the assumption of Theorem \ref{maintheorem}, if $u\in H^2(\Omega)$ is the solution of $\bar{\partial}$-Neumann problem, then $\psi=\begin{pmatrix}
		\psi_1\\\psi_2
	\end{pmatrix}=\gamma u\in H^{\frac{3}{2}}(\partial\Omega)$ and $\phi=\begin{pmatrix}
		\phi_1\\\phi_2
	\end{pmatrix}=\B u\in H^{\frac{1}{2}}(\partial\Omega)$ satisfies the boundary integral equation:
	\begin{equation*}
		\begin{cases}
			\frac{1}{2}\begin{pmatrix}
				\psi_1\\0
			\end{pmatrix}&=\gamma\G f+
			\begin{pmatrix}
				S_{12}\phi_2\\
				S_{22}\phi_2
			\end{pmatrix}-\frac{1}{2}\begin{pmatrix}
				T_{11}\psi_1\\
				T_{21}\psi_1
			\end{pmatrix}\\
			\frac{1}{2}\begin{pmatrix}
				0\\\phi_2
			\end{pmatrix}&=\B\G f+\frac{1}{2}\begin{pmatrix}
			T^*_{12}\phi_2\\
			T^*_{22}\phi_2
		\end{pmatrix}+\begin{pmatrix}
		R_{11}\psi_1\\
		R_{21}\psi_1\end{pmatrix}.
		\end{cases}
	\end{equation*}

\end{remark}

\section{The $\bar{\partial}$-Neumann problem with constant velocity on the unit ball}\label{obs}

In this section we are going to discuss the boundary values of the solution $u\in H^2(\Omega)$ of the $\bar{\partial}$-Neumann problem on $\Omega=\mathbb{B}(0,1)\subset\mathbb{C}^2$. As discussed before, there are in total $4$ boundary values $(\gamma u)_1$, $(\gamma u)_2$, $(\B u)_1$ and $(\B u)_1$. The boundary conditions of the $\bar{\partial}$-Neumann problem have already fixed two of them: $(\gamma u)_2=(\B u)_1=0$. But $(\gamma u)_1$ and $(\B u)_2$ are still free. In this section, we will study the relation among $(\B u)_2=:\phi_2$, $(\gamma u)_1=:\psi_1$ and the data $f$ in (\ref{dbarneumann}). To the best of author's knowledge, this kind of problems has not been studied yet. Indeed, Theorem \ref{maintheorem} provides a way to study this problem. We will focus on the $\bar{\partial}$-Neumann problem on the unit ball with constant velocity as follows.

We first define the $\bar{\partial}$-Neumann problem with constant boundary velocity.
\begin{definition}
	Let $\Omega\subset\mathbb{C}^n$ be a bounded pseudoconvex domain with smooth boundary. Given $f\in L^2(\Omega)$, let $u\in H^2(\Omega)$ be a solution of the $\bar{\partial}$-Neumann problem. We will call (\ref{dbarneumann}) the $\bar{\partial}$-Neumann problem with constant boundary velocity if $\gamma u$ and $\B u$ are both constant vectors on $\partial\Omega$.
\end{definition}

\begin{remark}
	In $\mathbb{C}^2$, the boundary condition of the $\bar{\partial}$-Neumann problem gives $(\gamma u)_2=(\B u)_1=0$. Consequently, to study the problem with constant boundary velocity, we can assume $\psi_1=(\gamma u)_1=b\in\mathbb{C}$ and $\phi_2=(\B u)_2=a\in\mathbb{C}$.
\end{remark}

We start observation by computing kernels on the unit ball. On the unit ball $\Omega=\mathbb{B}(0,1)$, the signed distance function is $\delta(z_1, z_2)=\sqrt{|z_1|^2+|z_2|^2}-1$. So \[L_z=\frac{\bar{z}_2}{\sqrt{|z_1|^2+|z_2|^2}}\frac{\partial}{\partial z_1}-\frac{\bar{z}_1}{\sqrt{|z_1|^2+|z_2|^2}}\frac{\partial}{\partial z_2}\enskip\text{and}\enskip N_z=\frac{z_1}{\sqrt{|z_1|^2+|z_2|^2}}\frac{\partial}{\partial z_1}+\frac{z_2}{\sqrt{|z_1|^2+|z_2|^2}}\frac{\partial}{\partial z_2}.\] 

Observe that  on $\partial\Omega=\partial\mathbb{B}(0,1)$, \[\inn{L_w\cdot L_z}=\frac{1}{2}(\bar{w}_2z_2+\bar{w}_1z_1)\enskip\text{and}\enskip \inn{N_w\cdot L_z}=\frac{1}{2}(w_1z_2-w_2z_1)\enskip\text{and}\enskip \inn{N_w\cdot N_z}=\frac{1}{2}(w_1\bar{z}_1+w_2\bar{z}_2).\]
Thus, by Proposition \ref{calc}, \[S\begin{pmatrix}
	\phi_1\\\phi_2
\end{pmatrix}(z)=\frac{1}{4\pi^2}\int_{\partial{\Omega}}|z-w|^{-2}\begin{pmatrix}
(\bar{w}_2z_2+\bar{w}_1z_1)\phi_1(w)+(w_1z_2-w_2z_1)\phi_2(w)\\(\bar{w}_2\bar{z}_1-\bar{w}_1\bar{z}_2)\phi_1(w)+(w_1\bar{z}_1+w_2\bar{z}_2)\phi_2(w)
\end{pmatrix}\,d\sigma_w\]
\[\begin{split}
&T\begin{pmatrix}
	\psi_1\\\psi_2
\end{pmatrix}(z)\\=&-(\pi)^{-2}\lim_{\epsilon\to 0}\int_{\partial\Omega\backslash\mathbb{B}_\epsilon(z)}|z-w|^{-4}\begin{pmatrix}
(-1+z_1\bar{w}_1+z_2\bar{w}_2)\psi_1(w)+(z_2w_1-z_1w_2)\psi_2(w)\\
	-(\bar{z}_2\bar{w}_1-\bar{z}_1\bar{w}_2)\psi_1(w)+(-1+\bar{z}_1w_1+\bar{z}_2w_2)\psi_2(w)
\end{pmatrix}\,d\sigma_w\\
=&-(\pi)^{-2}\lim_{\epsilon\to 0}\int_{\partial\Omega\backslash\mathbb{B}_\epsilon(z)}|z-w|^{-4}\begin{pmatrix}
	(-\frac{|z-w|^2}{2}+\frac{z_1\bar{w}_1+z_2\bar{w}_2-\bar{z}_1w_1-\bar{z}_2w_2}{2})\psi_1(w)+(z_2w_1-z_1w_2)\psi_2(w)\\
	-(\bar{z}_2\bar{w}_1-\bar{z}_1\bar{w}_2)\psi_1(w)+(-\frac{|z-w|^2}{2}+\frac{\bar{z}_1w_1+\bar{z}_2w_2-z_1\bar{w}_1-z_2\bar{w}_2}{2})\psi_2(w)
\end{pmatrix}\,d\sigma_w.
\end{split}\] In the last equation, we have used the identity \[|z-w|^2=|z|^2+|w|^2-2\Re z_1\bar{w}_1-2\Re z_2\bar{w}_2=2-2\Re z_1\bar{w}_1-2\Re z_2\bar{w}_2\] on $\partial\Omega$.

It is not hard to compute $T^*\phi$
\[T^*\phi=-(\pi)^{-2}\lim_{\epsilon\to 0}\int_{\partial\Omega\backslash\mathbb{B}_\epsilon(z)}|z-w|^{-4}\begin{pmatrix}
	(-1+\bar{w}_1z_1+\bar{w}_2z_2)\phi_1(w)+(-w_2z_1+w_1z_2)\phi_2(w)\\
	(\bar{w}_2\bar{z}_1-\bar{w}_1\bar{z}_2)\phi_1(w)+(-1+w_1\bar{z}_1+w_2\bar{z}_2)\phi_2(w)
\end{pmatrix}\,d\sigma_w\]

and $R\psi$
	\[R\begin{pmatrix}
	\psi_1\\\psi_2
\end{pmatrix}(z)=4(\pi)^{-2}\underset{\epsilon\to 0}{\finitepart}\bigint_{\partial\Omega\backslash\mathbb{B}_\epsilon(z)}\begin{pmatrix}
	\frac{1}{|z-w|^4}&0\\
	0&\frac{1}{|z-w|^4}
\end{pmatrix}\begin{pmatrix}
	\psi_1(w)\\\psi_2(w)
\end{pmatrix}\,d\sigma_w.\]

Now, let $\psi_2=\phi_1=0$ and let $\phi_2=a, \psi_1=b$ be constants. 

Clearly, \[S\begin{pmatrix}
	0\\\phi_2
\end{pmatrix}(z)=\frac{a}{4\pi^2}\int_{\partial{\Omega}}|z-w|^{-2}\begin{pmatrix}
w_1z_2-w_2z_1\\w_1\bar{z}_1+w_2\bar{z}_2
\end{pmatrix}\,d\sigma_w\]

We also have that
\[
	T\begin{pmatrix}
		\psi_1\\0
	\end{pmatrix}(z)=-b(\pi)^{-2}\lim_{\epsilon\to 0}\int_{\partial\Omega\backslash\mathbb{B}_\epsilon(z)}|z-w|^{-4}\begin{pmatrix}
		-1+z_1\bar{w}_1+z_2\bar{w}_2\\
		-(\bar{z}_2\bar{w}_1-\bar{z}_1\bar{w}_2)
	\end{pmatrix}\,d\sigma_w
\]

\[T^*\begin{pmatrix}
	0\\\phi_2
\end{pmatrix}(z)=-a(\pi)^{-2}\lim_{\epsilon\to 0}\int_{\partial\Omega\backslash\mathbb{B}_\epsilon(z)}|z-w|^{-4}\begin{pmatrix}
	-w_2z_1+w_1z_2\\
-1+w_1\bar{z}_1+w_2\bar{z}_2
\end{pmatrix}\,d\sigma_w\]

	\[R\begin{pmatrix}
	\psi_1\\0
\end{pmatrix}(z)=4b(\pi)^{-2}\underset{\epsilon\to 0}{\finitepart}\bigint_{\partial\Omega\backslash\mathbb{B}_\epsilon(z)}\begin{pmatrix}
	\frac{1}{|z-w|^4}\\
0
\end{pmatrix}\,d\sigma_w.\]

Let $z=(p, 0)$, where $|p|=1$. Observe that \[S\begin{pmatrix}
	0\\\phi_2
\end{pmatrix}(p, 0)=\frac{a}{4\pi^2}\int_{\partial{\Omega}}\frac{1}{|p-w_1|^{2}+|w_2|^2}\begin{pmatrix}
-w_2p\\w_1\bar{p}
\end{pmatrix}\,d\sigma_w.\] One finds that the first component of integrand is odd function on $w_2$ and so the first component  of $S\begin{pmatrix}
p\\0
\end{pmatrix}(p,0)$ is $0$.

\textit{A priori}, for each $b$ and $f$, we may find $a$ so that the $\bar{\partial}$-Neumann problem solves. However, we can observe a rigidity property as follows. It tells us that for $b=0$ and most $f$, there is no appropriate $a$ for a solvable $\bar{\partial}$-Neumann problem.


\begin{theorem}\label{newfound}
	Given an arbitrary $f\in L^2(\mathbb{B}(0,1))$, let $u\in H^2(\Omega)$ solves the $\bar{\partial}$-Neumann problem (\ref{dbarneumann}) with constant boundary velocity. Let $a:=(\B u)_2$ and $b:=(\gamma u)_1$. If $b=0$, then $(\G f, d\bar{z}_2)$ vanishes at $(1,0)$.
	
\end{theorem}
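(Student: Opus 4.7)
The plan is to feed $b=0$ into the boundary integral equation (\ref{boundaryintegral}) from Theorem \ref{maintheorem} and read off the resulting identity at the specific boundary point $(1,0)$, where the frame $\omega_{\overline{L}}, \omega_{\overline{N}}$ has a particularly simple expression in terms of $d\bar{z}_1, d\bar{z}_2$.

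First I would record that under the constant-velocity assumption together with the intrinsic $\bar\partial$-Neumann boundary conditions we have $\psi = \gamma u = (b,0)^T$ and $\phi = \B u = (0,a)^T$; hence $b=0$ forces $\psi \equiv 0$ on $\partial\Omega$. Plugging $\psi = 0$ into the first line of (\ref{boundaryintegral}) and using that $T$ is linear (so $T\psi = 0$) collapses that equation to
\[
0 = \gamma \G f + S\phi.
\]
Reading off the first component (in the $\omega_{\overline{L}}$ direction) at $z=(1,0)$ gives the identity $(\gamma \G f)_1(1,0) = -(S\phi)_1(1,0)$.

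Next I would invoke the explicit computation of $S\phi$ at $(p,0)$ performed just before the theorem statement. Specializing to $p=1$, its first component reads
\[
(S\phi)_1(1,0) = \frac{a}{4\pi^2}\int_{\partial\Omega}\frac{-w_2}{|1 - w_1|^2 + |w_2|^2}\,d\sigma_w.
\]
The involution $w_2 \mapsto -w_2$ is a measure-preserving diffeomorphism of $\partial\mathbb{B}(0,1)$ that fixes the denominator while flipping the sign of the numerator, so the integral vanishes and $(\gamma\G f)_1(1,0) = 0$. This parity observation is the only substantive step in the argument.

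The last step is to translate this vanishing back to the standard basis. At $z=(1,0)$ the derivatives of $\delta(z) = |z| - 1$ satisfy $\partial \delta/\partial z_1 = 1/2$ and $\partial \delta/\partial z_2 = 0$, which give $\omega_{\overline{L}}(1,0) = -d\bar{z}_2$ and $\omega_{\overline{N}}(1,0) = d\bar{z}_1$. Writing $\G f = (\G f)_1 \omega_{\overline{L}} + (\G f)_2 \omega_{\overline{N}}$, the vanishing of $(\G f)_1$ at $(1,0)$ is precisely the vanishing of the $d\bar{z}_2$-coefficient of $\G f$ there, which in the paper's inner product convention is the statement that $(\G f, d\bar{z}_2)$ vanishes at $(1,0)$. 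I anticipate no serious obstacle; the only point requiring care is keeping the two frames straight when translating between $(\gamma \G f)_1$ and $(\G f, d\bar{z}_2)$.
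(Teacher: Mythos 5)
Your proof is correct and follows essentially the same route as the paper: use $\psi\equiv 0$ to reduce the first line of the boundary integral equation to $(\gamma\G f)_1(1,0)=-(S\phi)_1(1,0)$, kill $(S\phi)_1(1,0)$ by the odd-in-$w_2$ symmetry of its kernel, and then translate $(\gamma\G f)_1(1,0)=0$ into the vanishing of the $d\bar z_2$-coefficient via the frame identity $\omega_{\overline{L}}(1,0)=-d\bar z_2$, $\omega_{\overline{N}}(1,0)=d\bar z_1$. The paper phrases the final step as a proof by contradiction (assuming $t\neq 0$ and deriving $t=0$), but the content is identical.
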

\begin{proof}
	
Assume on the contrary, $(\G f, d\bar{z}_2)\neq 0$ at $(1,0)$. In other words, if we assume $\gamma\G f=sd\bar{z}_1+td\bar{z}_2$, then $t\neq 0$ at $(1,0)$.
Looking at (\ref{boundaryintegral}), the first component of the first equation implies \[\frac{1}{2}\psi_1(z)=(\gamma\G f)_1(z)+(S\phi)_1(z)-\frac{1}{2}(T\psi)_1(z).\] At $z=(1, 0)$, we have that \[\frac{1}{2}\psi_1(1,0)=(\gamma\G f)_1(1,0)-\frac{1}{2}(T\psi)_1(1,0),\] because the discussion before the statement of Theorem \ref{newfound}.  If, moreover, we let $\psi_1=b=0$, one obtains $(\gamma\G f)_1(1,0)=0$.

 To convert $\gamma\G f$ into the coordinate $\omega_{\overline{L}}$ and $\omega_{\overline{N}}$, we simply calculate \[\gamma\G f=(\sqrt{2})^{-1}\begin{pmatrix}
	z_2&-z_1\\\bar{z}_1&\bar{z}_2
\end{pmatrix}\begin{pmatrix}
	s\\t
\end{pmatrix}=(\sqrt{2})^{-1}\begin{pmatrix}
z_2s-z_1t\\\bar{z}_1s+\bar{z}_2t
\end{pmatrix}.\] In other words, $\gamma\G f=(\sqrt{2})^{-1}\left((z_2s-z_1t)\omega_{\overline{L}}+(\bar{z}_1s+\bar{z}_2t)\omega_{\overline{N}}\right)$. At $z=(1,0)$, we have that $\gamma\G f(1,0)=\begin{pmatrix}
-t\\s
\end{pmatrix}$. Since by the assumption $t\neq 0$, we complete the proof by the contradiction with $(\gamma\G f)_1(1,0)=0$.
\end{proof}
\bigskip
\bigskip

	\noindent {\bf Acknowledgments}. The author thanks Dr. Zhaosheng Feng for many fruitful conversations about the Differential Equations and Integral Equations. The author learned a lot from him. A special thank goes to the referee for his/her constructive suggestions.
\printbibliography

\end{document}